\definecolor{darkred}{RGB}{160,0,0}
\definecolor{darkblue}{RGB}{0,0,160}
\renewcommand{\>}{\rangle}
\newcommand{\<}{\langle}
\newcommand\defas{\coloneqq}
\newcommand{\gin}{\operatorname{gin}}
\newcommand{\Pf}{\operatorname{Pf}}
\theoremstyle{plain}
\newtheorem*{thm*}{Theorem}
\newtheorem*{lm*}{Lemma}
\newtheorem*{ex*}{Example}
\newtheorem{thm}{Theorem}[section]
\newtheorem{lm}[thm]{Lemma}
\newtheorem{cor}[thm]{Corollary}
\newtheorem{prop}[thm]{Proposition}
\theoremstyle{definition}
\declaretheorem[sibling=thm,name=Remark,qed={$\diamondsuit$}]{re}
\newcommand{\QQ}{{\mathbb Q}}
\newcommand{\ZZ}{{\mathbb Z}}
\newcommand{\PP}{{\mathbb P}}
\newcommand{\cha}{\operatorname{char}}
\newcommand{\id}{\operatorname{id}}
\newcommand{\Gr}{\operatorname{Gr}\nolimits}
\newcommand{\GL}{\operatorname{GL}\nolimits}
\renewcommand{\phi}{\varphi}
\begin{document}

\title{No short polynomials vanish on bounded rank matrices}
\author{Jan Draisma}
\author{Thomas Kahle}
\author{Finn Wiersig}
\date{\today}

\keywords{determinantal ideal, sparse polynomial, bounded rank matrices}

\thanks{JD was partly supported by Vici Grant 639.033.514 from the
  Netherlands Organisation for Scientific Research (NWO) and by
  project grant 200021\textunderscore 191981 from the Swiss National
  Science Foundation (SNSF).  TK and FW were supported by the German
  Research Foundation DFG -- 314838170, GRK~2297 MathCoRe}

\makeatletter
\@namedef{subjclassname@2020}{\textup{2020} Mathematics Subject Classification}
\makeatother
\subjclass[2020]{14M12, 14Q20, 15A15}


\begin{abstract}
  We show that the shortest nonzero polynomials vanishing on
  bounded-rank matrices and skew-symmetric matrices are the
  determinants and Pfaffians characterising the rank.  Algebraically,
  this means that in the ideal generated by all $t$-minors or
  $t$-Pfaffians of a generic matrix or skew-symmetric matrix one
  cannot find any polynomial with fewer terms than those determinants
  or Pfaffians, respectively, and that those determinants and
  Pfaffians are essentially the only polynomials in the ideal with
  that many terms.  As a key tool of independent interest, we show
  that the ideal of a very general $t$-dimensional subspace of
  an affine $n$-space does not contain polynomials with fewer than
  $t+1$ terms.
\end{abstract}

\maketitle

\section{Introduction}
In many areas of computational mathematics, sparsity is an essential
feature used for complexity reduction.  Sparse mathematical objects
often allow more compact data structures and more efficient
algorithms.  We are interested in sparsity as a complexity measure for
polynomials, where, working in the monomial basis, it means having few
terms.  This augments the usual degree-based complexity measures such
as the Castelnuovo--Mumford regularity.

Sparsity based complexity applies to geometric objects, as
well.  If
$X\subset K^{n}$ is a subset of affine $K$-space, one can ask for the
shortest polynomial that vanishes on~$X$.  A~{\em monomial} vanishes on $X$
if and only if $X$ is contained in the union of the coordinate
hyperplanes.  That $X$ is cut out by {\em binomials} can be characterised
geometrically using the log-linear geometry of binomial
varieties~\cite[Theorem~4.1]{eisenbud96:_binom_ideal}.  Algorithmic
tests for single binomials vanishing on $X$ are available both
symbolically~\cite{JKK17} and
numerically~\cite{hauenstein2021binomiality}.  We ask for the shortest
polynomial vanishing on~$X$, or algebraically, the shortest polynomial
in an ideal of the polynomial ring.  The shortest polynomials
contained in (principal) ideals of a univariate polynomial ring have
been considered in \cite{giesbrecht2010computing}.  Computing the
shortest polynomials of an ideal in a polynomial ring seems to be a
hard problem with an arithmetic flavor.  Consider Example~2 from
\cite{JKK17}: For any positive integer $n$, let
$I_n=((x-z)^2,nx-y-(n-1)z)\subseteq\QQ[x,y,z]$. The ideals $I_n$ all
have Castelnouvo-Mumford regularity 2 and are primary over
$(x-z,y-z)$, so in a sense they are all very similar.
However, $I_{n}$ contains the binomial $x^n-yz^{n-1}$ and
there is no binomial of degree less than $n$ in~$I_{n}$.  This means
that the syzygies and also the primary decomposition carry no information
about short polynomials.  It is unknown to the authors if a Turing machine can
decide if an ideal contains a polynomial with at most $t$ terms.

In this paper we show that determinants are the shortest nonzero
polynomials that vanish on the set of fixed-rank matrices and that,
moreover, they are essentially the only shortest polynomials in the
determinantal ideal (Theorem~\ref{thm:Mats}).  A~variant of the proof
yields a similar result (Theorem~\ref{thm:skew}) for skew-symmetric
matrices, where Pfaffians, the square roots of determinants, are the
shortest vanishing nonzero polynomials.  Their number of terms is the
double factorial $(r+1)!! \defas (r+1)(r-1)\cdots$.  Both proofs rely
on Proposition~\ref{prop:Linear}, a bound for the number of terms of
polynomials vanishing on very general linear spaces.  In
Section~\ref{sec:symmetric-matrices} we briefly discuss the case of
bounded rank symmetric matrices, which however remains mostly open!

Our proofs have geometric aspects, and for these it is convenient to
work with algebraically closed fields. However, Theorems~\ref{thm:Mats}
and~\ref{thm:skew} immediately imply that the corresponding ideals over
arbitrary fields contain no shorter polynomials than determinants and
Pfaffians, respectively; see Corollaries~\ref{cor:Mats}
and~\ref{cor:skew}. In the determinant case, this improves a lower
bound of $(r+1)!/2$ terms established by the last two authors via purely
algebraic methods \cite{kahle2021short}.

\subsection*{Notation and conventions}

In everything that follows there are fixed bases with respect to which
any sparsity is considered.  We use the standard basis of $K^{n}$ and
the monomial basis for polynomials. We write $K[x_1,\ldots,x_n]_d$ for
the space of homogeneous polynomials of degree $d$ in the variables
$x_1,\ldots,x_n$ with coefficients from the field $K$. Except in
Corollaries~\ref{cor:Mats} and~\ref{cor:skew}, we assume that $K$ is
algebraically closed. The characteristic of $K$ is arbitrary.

\subsection*{Acknowledgements}

The authors thank Rob Eggermont for useful conversations and his proof
of Lemma~\ref{lm:S3}.

\section{No short polynomials vanish on very general subspaces}
\label{sec:genSubspace}
If $X$ is an irreducible algebraic variety over $K$, we say
that a {\em sufficiently general} $x \in X$ has a certain property if there
exists a Zariski open and dense subset $Y\subset X$ such
that all $x \in Y$ have that property. The open and dense
subset $Y$ is typically not made explicit, and may moreover
shrink finitely many times in the course of a proof as further assumptions are
imposed on $x$. This notion of genericity is common in
algebraic geometry.  

Another common notion from algebraic geometry that we will need is the
following. We say that a {\em very general} $x \in X$ has a certain
property if there is a countable collection of proper, Zariski-closed
subsets of $X$, defined over $K$, such that any $x$ outside their
union satisfies the property.  If the ground field $K$ is too small,
then such very general $x$ may exist only over a field extension of
$K$.  This is no problem in our application to varieties of
bounded-rank matrices and skew-symmetric matrices, where, to prove our
results, we may always extend the field as desired.  However, in our
result on linear spaces, we will require that the space be very
general.

Indeed, we consider properties of a sufficiently or
even very general
$r$-dimensional linear subspace $U\subset K^{n}$.  In this case, $X$
is understood to be the Grassmannian $\Gr_r(K^n)$, and $U$ is called
sufficiently general if the point in $\Gr_r(K^n)$
representing it is sufficiently general. 
For example, when $U\in \Gr_r(K^n)$ is sufficiently general, any $r$
coordinates are linearly independent on~$U$, and hence the shortest
linear polynomials vanishing on $U$ have $r+1$ terms.  For instance,
$c_1 x_1 + \cdots + c_{r+1} x_{r+1}=0$ holds on $U$ for certain
nonzero $c_1,\ldots,c_{r+1} \in K$.  Multiplying such $(r+1)$-term
linear polynomials by monomials, or, if $K$ has positive
characteristic $p>0$, raising them to $p^{e}$-th powers yields short
polynomials of higher degree also vanishing on~$U$.
A key step in our argument is to show that these are all shortest
polynomials vanishing on~$U$, at least for very general $U$. 

To formulate and prove our results in a characteristic independent
manner, let $p$ be the characteristic exponent of $K$, i.e., $p:=1$ if
$\cha K=0$ and $p:=\cha K$ otherwise.

\begin{prop}
  \label{prop:Linear}
  Let $n\geq r$ and $d$ be nonnegative integers and let $U$ be a
  very general $r$-dimensional subspace of~$K^n$. Then a
  nonzero polynomial $f \in K[x_1,\ldots,x_n]$ that vanishes
  identically on $U$ has at least $r+1$ terms. If $r \neq 1$, then
  equality holds if and only if $f$ has the form
  $u \cdot ((c_{1} x_{i_1})^{p^e} + \cdots + (c_{r+1}
  x_{i_{r+1}})^{p^e})$ for some monomial $u$, distinct indices
  $i_1<\ldots<i_{r+1}$, nonnegative integer $e$, and
  $\sum_j c_j x_{i_j}$ a linear form that vanishes on~$U$.
\end{prop}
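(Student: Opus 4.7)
The plan is to proceed by induction on $r$. First I would reduce to $f$ homogeneous: since $U$ is a linear cone, each homogeneous component of $f$ vanishes on $U$ separately. I will use throughout that $I(U)$ is prime (as $U$ is irreducible) and contains no monomial for generic $U$, so one may freely divide by monomials.

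For the lower bound $t \geq r+1$, suppose for contradiction that $f$ has $t \leq r$ terms and pick a variable $x_k$ appearing in some term. Then $f|_{x_k=0}$ has strictly fewer terms and vanishes on $U \cap \{x_k = 0\}$, which for generic $U$ is a sufficiently general $(r-1)$-dimensional subspace of $K^{n-1}$ (the rational map $\Gr_r(K^n) \to \Gr_{r-1}(K^{n-1})$ given by intersection with $\{x_k = 0\}$ is dominant). The inductive hypothesis forces $f|_{x_k=0} = 0$, so $x_k \mid f$; writing $f = x_k g$ gives $g \in I(U)$ of smaller degree with the same number of terms, and a secondary induction on $\deg f$ concludes.

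For the equality case with $r \geq 2$, write $f = \sum_{i=1}^{r+1} c_i m_i$ with distinct $m_i$ and $c_i \in K^*$, and reduce to $\gcd(m_1, \ldots, m_{r+1}) = 1$ by dividing out the common monomial factor. A first structural claim is that for each variable $x_k$ appearing in some $m_i$, exactly one $m_i$ involves $x_k$: applying the lower bound at dimension $r-1$ to $f|_{x_k=0}$, if two or more $m_i$ contained $x_k$ then the restriction would have $\leq r-1$ terms and hence vanish, forcing $x_k \mid f$ and contradicting coprimality. Thus the supports $S_i := \mathrm{supp}(m_i)$ partition the variables appearing in $f$.

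The decisive step is to show each $m_i = x_{j_i}^{p^e}$ for a common exponent $p^e$ and distinct indices $j_i$. For $r \geq 3$ I would apply the equality case inductively to $f|_{x_k=0}$, which has exactly $r$ terms on a sufficiently general $(r-1)$-dimensional subspace; the disjointness of supports forces the common-monomial factor in the inductive conclusion to be trivial, and comparing the descriptions obtained for different $k$ yields a uniform exponent $p^e$. The main obstacle is the base case $r = 2$, where the hypothesis at $r-1 = 1$ provides only the lower bound. To handle it I would parameterize $U$ by a generic $n \times 2$ matrix with $x_j|_U = \mu_j y_1 + \nu_j y_2$, then specialize the parameters for $j \in S_2 \cup S_3$ so that $m_2|_U = y_1^d$ and $m_3|_U = y_2^d$; universal rank drop then forces the coefficient of $y_1^k y_2^{d-k}$ in $m_1|_U = \prod_{j \in S_1}(\mu_j y_1 + \nu_j y_2)^{a_{1,j}}$ to vanish identically in the remaining parameters for each $0 < k < d$, yielding $\prod_{j} \binom{a_{1,j}}{k_j} = 0$ for every decomposition $(k_j)$ with $\sum_j k_j = k$. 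A Lucas-style iteration (starting at $k = 1$ to force $p \mid a_{1,j}$ and continuing with $k = p, p^2, \ldots$) then forces $|S_1| = 1$ and $\deg m_1 = p^e$, and repeating the argument for $m_2$ and $m_3$ completes the base case. Finally, setting $\ell := \sum_i c_i^{1/p^e} x_{j_i}$, using algebraic closedness of $K$ to extract the $p^e$-th roots, one has $\ell^{p^e} = f \in I(U)$, so $\ell \in I(U)$ by radicality of the prime ideal, and reincorporating the monomial factor from the gcd reduction yields the asserted form.
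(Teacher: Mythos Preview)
Your argument is correct and takes a genuinely different, considerably more elementary route than the paper's.

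For the lower bound, the paper works globally: it regards the span $V$ of the monomials in $f$ as a point in a Grassmannian $\Gr_N(K[x_1,\ldots,x_n]_d)$, passes to the $\GL_n$-orbit closure, and invokes Borel's fixed point theorem to find a Borel-stable monomial space $W$ in that closure; the combinatorics of Borel-stable sets of monomials then forces $N\geq r+1$. Your approach instead is a clean double induction on $(r,\deg f)$: intersecting $U$ with a coordinate hyperplane drops $r$ by one (using dominance of $\Gr_r(K^n)\dashrightarrow\Gr_{r-1}(K^{n-1})$), and primality of $I(U)$ lets you divide out the resulting variable to drop the degree. This avoids all group-theoretic machinery.

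For the equality characterisation, the paper shows that the Borel-stable limit must equal $x_1^{d-p^e}\langle x_1^{p^e},\ldots,x_{r+1}^{p^e}\rangle$, identifies this with the generic initial space $\gin(V)$, and then applies a substantial lemma (proved via incidence varieties and Bertini's theorem) to recover the shape of $V$ itself. You instead continue the hyperplane induction: disjointness of the monomial supports lets you strip off one term at a time and invoke the equality case at $r-1$, which is straightforward for $r\geq 3$ but forces you to handle $r=2$ separately. Your treatment of that base case---passing from the generic $U$ to \emph{all} $U$ via closedness of the dependency locus, specialising so that $m_2|_{U_0}=y_1^d$ and $m_3|_{U_0}=y_2^d$, and then running a Lucas-type argument on the vanishing multinomial coefficients---is correct, though the write-up should make explicit that the coefficients $c_i$ are allowed to change under specialisation (you only need $c_1\neq 0$, which follows since $y_1^d,y_2^d$ are independent).

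In short: the paper's proof is uniform in $r$ but imports Borel fixed points, generic initial ideals, and Bertini; yours is self-contained and inductive but pays for this with an ad hoc (if pleasant) base case at $r=2$. Both are valid, and yours has the advantage of being accessible without the Gr\"obner/algebraic-group toolkit.
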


\begin{re} \label{re:Dim1} If $p=1$ or $e=0$, the second factor is
  just a linear form.  Furthermore, the requirement that $r \neq 1$ is
  necessary for the characterisation of the shortest polynomials.
  Indeed, if $r=1$, then some linear form $c_1 x_1 + c_2 x_2$ vanishes
  on $U$, and then so does the binomial $c_1^2 x_1^2 - c_2^2 x_2^2$,
  which is not of the shape in the proposition.  If $r=1$, then the
  1-dimensional torus $K^*$ acts, via scaling, on $U$ with a dense
  orbit, and thus the ideal of $U$ is a binomial ideal.  Binomial
  ideals are linearly spanned by the binomials they contain, which
  shows that they contain many binomials.
\end{re}

\begin{re}
  We do not know whether {\em very general} in
  Proposition~\ref{prop:Linear} can be replaced by {\em
  sufficiently general}. In our proof below, we require that
  $U$ avoids countably many Zariski-closed subsets of the
  Grassmannian. 
\end{re}

Our proof of the lower bound $r+1$ in Proposition~\ref{prop:Linear} is
quite concise; readers only interested in this can skip directly to
the proof.  The proof of the characterisation of equality, however, is
more involved.  It requires the following application of Gr\"obner
bases.  The {\em reverse lexicographic order} on the space
$K[x_1,\ldots,x_n]_d$ is defined by $x^\alpha > x^\beta$ if for the
{\em largest} $j$ with $\alpha_j \neq \beta_j$ we have
$\alpha_j<\beta_j$. Thus, the monomial basis of this space, in
decreasing order, is
\[ x_1^d, x_1^{d-1} x_2,\ldots, x_2^d,
x_1^{d-1}x_3,x_1^{d-2}x_2x_3,\ldots,x_2^{d-1}x_3,x_1^{d-2}x_3^2,\ldots,x_n^d.
\]
The {\em generic initial space} $\gin(V)$ of a subspace
$V \subseteq K[x_1,\ldots,x_n]_d$ is the space spanned by the leading
monomials of elements of $g V$, which for a sufficiently general
element of $g\in \GL_n$ does not depend on~$g$.  This space has two
important properties. First, it is in the closure of the $\GL_n$-orbit
of $V$ in $\Gr_r(K^n)$, and second, it is stable under the Borel
subgroup of $\GL_n$ that stabilises the chain of subspaces
\[\< x_1 \> \supset \< x_1,x_2 \> \supset \ldots \supset \<
  x_1,\ldots,x_n \>. \]
We employ the following result from Gr\"obner basis theory.

\begin{lm} \label{lm:Gin} Let $d \in \ZZ_{\geq 1}$.  Suppose that a
  linear space $V \subseteq K[x_1,\ldots,x_n]_d$ has
  $\gin(V)=x_1^{d-p^e} \cdot \< x_1^{p^e},\ldots,x_{s}^{p^e} \>$ for
  some $s$ with $3 \leq s \leq n$ and some $e \in \ZZ_{\geq 0}$. Then
  $V=f \cdot \< \ell_1^{p^e},\ldots,\ell_s^{p^e} \>$ for some
  $f \in K[x_1,\ldots,x_n]_{d-p^e}$ and linear forms
  $\ell_1,\ldots,\ell_s \in K[x_1,\ldots,x_n]_1$.
\end{lm}
In characteristic zero, this is a special case of \cite[Main
Theorem]{Floystad99}.  Our proof follows the strategy of the proof
there, but replaces algebraic arguments involving differentiation by
geometric arguments that suffice in our setting.

\begin{proof}
  The proof can be split as follows.  If $d=p^e$, then $V$ consists of
  $d$-th powers of linear forms; while if $d>p^e$, then it suffices to
  show that $V=\tilde{f} \cdot \tilde{V}$ for some homogeneous
  $\tilde{f}$ of positive degree $d-\tilde{d}>0$.  In this case
  $\gin(\tilde{V})=x_1^{\tilde{d}-p^e} \cdot \<
  x_1^{p^e},\ldots,x_s^{p^e} \>$ and the argument applies
  to~$\tilde{V}$.  If $d=1$, then the first statement obviously holds,
  so we may assume that $d>1$.

  We prove both statements first for $s=n$. For a sufficiently
  general $g \in \GL_n$, the space $g V$ contains a polynomial $f$
  with leading monomial $x_1^{d-p^e} x_n^{p^e}$.  By definition of the
  reverse lexicographic order, $f$ is divisible
  by~$x_n^{p^e}$. Consequently, $V$ itself contains a nonzero
  polynomial divisible by $g^{-1} x_n^{p^e}$, namely, $g^{-1}
  f$. Since this holds for any sufficiently general $g$, $V$ contains
  a nonzero multiple of the $p^e$-th power of any sufficiently general
  linear form.

Let $L:=K[x_1,\ldots,x_n]_1$ be the space of linear forms. 
Consider the incidence variety 
\[ Z:=\{([\ell],[f]) \in \PP L \times \PP V \mid \ell^{p^e} \text{ divides }
f \}. \]
By the previous paragraph, the projection $Z \to \PP L$ is dominant. Since $Z$ is
projective, it is in fact surjective. Replace $Z$ by an
irreducible component that maps surjectively to $\PP L$, so that $\dim(Z)
\geq \dim \PP L=n-1=\dim \PP V$. Since the fibres of $Z \to \PP V$ are
finite---each nonzero element of $V$ is divisible only by finitely many
$p$-th powers of linear forms---we find that also $\dim(Z) \leq \dim \PP
V$. Hence $\dim(Z)=\dim \PP V=\dim \PP L$. 

This implies two things: First, any sufficiently general fibre of $Z \to
\PP L$ has dimension zero---and since these fibres are projective linear
spaces, a sufficiently general fibre is a single point. And second,
$Z \to \PP V$ is surjective, so any element of $V$ is divisible by
some $p^e$-th power of a linear form. If $d=p^e$, then we are done,
so we may henceforth assume that $d>p^e$.

Now fix a basis $f_1,\ldots,f_n$ of $V$ and let $X \subseteq
\PP^{n-1}$ be the affine open subset where $f_1 \neq 0$. Consider the morphism
\[ 
\phi\colon X \to K^{n-1},\ p \mapsto
\left(\frac{f_2(p)}{f_1(p)},\ldots,\frac{f_n(p)}{f_1(p)} \right).
\]
Since doing this for $\gin(V)$ would yield an image closure of
dimension $n-1 \geq 3-1=2$ (take the $f_i$ equal to
$x_1^{d-p^e} x_i^{p^e}$ to see this) and the image closure dimension
can only go down in a limit, $\overline{\phi(X)}$ has dimension at
least $2$.  Hence, by Bertini's theorem \cite[Theor\`eme
6.3(4)]{Jouanolou83}, the preimage in $X$ of a sufficiently general
affine hyperplane $H$ in $K^{n-1}$ is irreducible.  If $H$ has the
equation $a_1y_{1} + a_2 y_2 + \cdots + a_n y_n =0$, then $\phi^{-1}(H)$
has the equation $h:=a_1 f_1 + a_2 f_2 \cdots + a_n f_n=0$. The
left-hand side is an element of $V$ and hence factors as
$h=k \cdot \ell^{p^e}$ with $k$ of degree $d-p^e>0$ and $\ell$ a
linear form.  Irreducibility of $\phi^{-1}(H)$ implies that either the
hypersurface in $\PP^{n-1}$ defined by $k$ is disjoint from the open
set $X$, or the hyperplane defined by $\ell$ is. In the latter case,
$\ell$ is a divisor of $f_1$, but this can be avoided by generality
of~$H$.  Hence $k$ is a divisor of some power of~$f_1$.  In total
$f_1$ has a nontrivial gcd with every element in $V$ and thus all
elements in $V$ have a nontrivial gcd, as desired. This concludes the
proof for the case where $s=n$.

Now assume $n>s \geq 3$. For any sufficiently general $g \in \GL_n$,
let $\tilde{V} \subseteq K[x_1,\ldots,x_s]$ be the space obtained from
$gV$ by setting the variables $x_{s+1},\ldots,x_n$ to zero. Then
$\gin(\tilde{V})=x_1^{d-p^e} \cdot \< x_1^{p^e},\ldots,x_s^{p^e} \>$
and hence, by the above,
$\tilde{V}=\tilde{f} \cdot \< x_1^{p^e},\ldots,x_s^{p^e} \>$ for a
nonzero homogeneous polynomial $\tilde{f}$.
We again distinguish two cases. If $d=p^e$ and some nonzero polynomial
$f$ in $V$ is not a linear combination of $d$-th powers of variables,
then $f$ is not an additive polynomial, and hence not additive on
sufficiently general $s$-dimensional subspaces of~$K^n$ (here we only
need that $s \geq 2$). This implies that $gf$ with the last $n-s$
variables set to zero is not a linear combination of $d$-th powers of
variables, contradicting the previous paragraph.

Now assume that $d>p^e$ and let $Y \subseteq \PP^{n-1}$ be the variety
defined by the polynomials in $V$. Then the penultimate
paragraph implies that the intersection of 
$Y$ with a sufficiently general codimension-$(n-s)$ subspace
contains a
hypersurface in $\PP^{s-1}$ (defined by $g^{-1}\tilde{f}$,
where $g \in \GL_n$ maps the linear equations for the
subspace to $x_{s+1},\ldots,x_{n}$).
But then $Y$ must
itself have a component of dimension $n-2$, i.e., a hypersurface. This
shows that the elements in $V$ have a nontrivial gcd, and we are done.
\end{proof}

\begin{proof}[Proof of Proposition~\ref{prop:Linear}]
  Let $U \subseteq K^n$ be a very general $r$-dimensional
  subspace with $r \geq 2$. We want to show that polynomials vanishing
  on $U$ have at least $r+1$ terms, and characterise those where
  equality holds. The requirement that $U$ be {\em very}
  general comes from the fact that we have to exclude
  equations for $U$ with fewer than $r+1$ terms of {\em
  varying degrees}. In each fixed degree, {\em sufficiently}
  general suffices. 
  \medskip

  \noindent {\bf Part 1: proof of the lower bound $r+1$.}
  If some polynomial $f$ vanishes on $U$, then every homogeneous
  component of $f$ vanishes on $U$, so we may assume that $f$ is
  homogeneous of some degree $d$.  Consider a space $V$ spanned by $N$
  distinct degree-$d$ monomials $x^{\alpha_i}, i=1,\ldots,N$ in $n$
  variables. The set of $U \in \Gr_r(K^n)$ for which there exists a
  point $[f_1:\ldots:f_N] \in \PP(K^N)$ with $\sum_i f_i x^{\alpha_i}$
  identically zero on $U$ is a closed subset of the Grassmannian
  $\Gr_r(K^n)$.  Since, for a fixed $d$, there are only finitely many
  subsets of the set of degree-$d$ monomials, we may assume that $U$
  lies outside all of these closed subsets that are not the entire
  Grassmannian.  It follows, then, that if such a point
  $[f_1:\ldots:f_N]$ {\em does} exist for $U$, then such a point
  exists for {\em every} $r$-dimensional subspace of $K^n$. We assume
  that this is the case and bound $N$ from below.

  Write $F:=K[x_1,\ldots,x_n]_d$ and consider the incidence variety
  \[ 
    Z:=\{(W,[f],U) \in \Gr_N(F) \times \PP(F) \times \Gr_r(K^n) 
    \mid f \in W \text{ and } f|_U \equiv 0\}
  \]
  and the projection $\pi\colon Z \to \Gr_N(F) \times \PP(F)$. The
  set of points $z \in Z$ for which $\pi^{-1}(\pi(z))$ is
  all of $\{\pi(z)\} \times Gr_r(K^n)$ is a closed subset $Y$ of $Z$, and the image
  of $Y$ in $\Gr_N(F)$ is a closed subset $C$ of $\Gr_N(F)$.  The span $V$
  of the $x^{\alpha_i}$ is a point in $C$, so $C$ is nonempty.  In fact,
  in what follows we replace $C$ by the $\GL_n$-orbit closure of $V$
  in $\Gr_N(F)$.

  By construction, $C$ is a $\GL_n$-stable closed subset of the
  projective variety $\Gr_N(F)$, and hence by Borel's fixed point
  theorem \cite[Theorem~10.4]{BorelLAG}, $C$ contains a point $W$ that
  is stable under the Borel subgroup $B \subseteq \GL_n$ that
  stabilises the flag
  \[ \< x_1 \> \supset \< x_1,x_2 \> \supset \ldots \supset \<
    x_1,\ldots,x_n \>. \]
  Since $B$ contains the torus $(K^*)^n$, $W$ is spanned by
  monomials. Furthermore, these monomials satisfy the following
  well-known property: if $x^\beta \in W$ and $\beta_j>0$, then
  writing $\beta_j=p^e m$ with $p\!\!\not|\, m$ we have
  $x^\alpha:=x^{\beta-p^e e_j + p^e e_i} \in W$ for all $i<j$. Indeed,
  this follows easily by considering the one-parameter subgroup in $B$
  that maps $x_j$ to $x_j + t x_i$ and fixes all other basis elements:
  this maps $x^\beta$ to $x^\beta + m \cdot t^{p^e} \cdot x^\alpha +$
  terms of higher degree in $t$. Since $m \neq 0$ in $K$,
  $B$-invariance of $W$ implies that $x^\alpha \in W$.

  By construction, on every $r$-dimensional subspace of
  $K^n$ some nonzero element of $W$ vanishes identically. 
  Since on the space $K^r \times \{0\}^{n-r}$ no nonzero polynomial in
  the first $r$ variables vanishes, $W$ contains a monomial $x^\beta$
  with $\beta_s>0$ for some $s>r$. Writing $\beta_s=p^e m$ as above,
  we find that $W$ also contains the $s-1$ monomials $x^{\beta-p^e e_s +
  p^e e_i}$ with $i=1,\ldots,s-1$. Hence $W$ has dimension at least $s
\geq r+1$, as desired.
\medskip

\noindent
  {\bf Part 2: Proof of the characterisation.}
  If $\dim W=r+1$ holds, then the previous paragraph shows
  that $s=r+1$, and that we have already listed all
  monomials in $W$. By similar arguments we find that 
  $W=x_1^{d-p^e} \cdot \< x_1^{p^e},\ldots,x_s^{p^e} \>$.
  We have thus established that every $B$-stable element in the
  $\GL_n$-orbit closure of our original space $V$ is this particular
  space $W$. This applies, in particular, to $W=\gin(V)$. But then,
  by Lemma~\ref{lm:Gin}, $V=f \cdot \< \ell_1^{p^e}, \ldots,
  \ell_s^{p^e} \>$ for some polynomial $f$ of degree $d-p^e$ and some
  linear forms $\ell_1,\ldots,\ell_s$. Finally, since $V$ is spanned by
  monomials, $f$ is a monomial and the $\ell_i$ can be taken
  to be variables. This proves the proposition. 
\end{proof}

\section{No short polynomials vanish on bounded-rank matrices}
\label{sec:generic-matrix}
Using Proposition~\ref{prop:Linear} inductively, we can characterise
the shortest polynomials vanishing on fixed rank matrices.

\begin{thm} \label{thm:Mats} Let $m,n,r$ be natural numbers with
  $m,n \geq r$. Then there exists no nonzero polynomial with fewer
  than $(r+1)!$ terms that vanishes on all rank-$r$ matrices in
  $K^{m \times n}$. Moreover, if $r \geq 2$, then every polynomial
  with exactly $(r+1)!$ terms that vanishes on all such matrices is a
  term times the $p^e$-th power of some $(r+1)$-minor, for some
  nonnegative integer~$e$.
\end{thm}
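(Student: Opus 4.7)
My plan is to induct on $r$, using Proposition~\ref{prop:Linear} at each step. The base case $r=1$ is immediate: a single monomial $x_{ij}$ does not vanish on the rank-one matrix whose only nonzero entry is the $(i,j)$-one, so any polynomial vanishing on the rank-one variety has at least $2 = 2!$ terms. For the induction step, let $f \neq 0$ vanish on rank-$r$ matrices in $K^{m \times n}$ with $m, n > r$ (the cases $m = r$ or $n = r$ are vacuous). After permuting rows I may assume $f$ depends on the entries $c = (x_{11}, \ldots, x_{1n})$ of the first row, and expand $f = \sum_\beta h_\beta(X')\, c^\beta$, where $X'$ denotes the lower $(m-1) \times n$ submatrix.

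The heart of the argument consists of two specializations of $X'$. When $M \in K^{(m-1) \times n}$ is generic of rank $r$, stacking $c$ on top of $M$ gives a rank-$\leq r$ matrix exactly when $c \in \mathrm{row}(M)$, which is a sufficiently general $r$-dimensional subspace of $K^n$. Applying Proposition~\ref{prop:Linear} to $\sum_\beta h_\beta(M)\, c^\beta$ yields $|\{\beta : h_\beta \not\equiv 0\}| \geq r+1$. When instead $M$ is generic of rank $\leq r-1$, every stacking has rank $\leq r$, so $\sum_\beta h_\beta(M)\, c^\beta \equiv 0$ as a polynomial in $c$, forcing each $h_\beta$ to vanish identically on rank-$(r{-}1)$ matrices in $K^{(m-1) \times n}$. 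The inductive hypothesis then gives $|\mathrm{terms}(h_\beta)| \geq r!$ for each surviving $\beta$, and the product bound $|\mathrm{terms}(f)| \geq (r+1)\cdot r! = (r+1)!$ follows.

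For the characterization when $r \geq 2$ and equality holds, both inequalities must be tight: exactly $r+1$ exponents $\beta$ appear, and each $h_\beta$ has exactly $r!$ terms. The equality clause of Proposition~\ref{prop:Linear} then produces fixed data $\gamma$, indices $i_1 < \cdots < i_{r+1}$, and $e \geq 0$ such that for generic $M$ the specialized $c$-polynomial factors as $c^\gamma \cdot \sum_{j=1}^{r+1}(c_j(M)\, x_{1,i_j})^{p^e}$. Matching coefficients gives $h_{\gamma + p^e e_{i_j}}(M) = c_j(M)^{p^e}$; since $K$ is perfect the unique $p^e$-th root defines a polynomial $c_j$ that vanishes on rank-$(r{-}1)$ matrices and has $r!$ terms. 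For $r \geq 3$, the inductive characterization then rewrites each $c_j$ as a monomial times the $p^{e'_j}$-th power of some $r$-minor of $X'$.

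The main obstacle I foresee is the final assembly: showing that the resulting $r$-minors are precisely the cofactors along the first row of a single $(r{+}1)$-minor of $X$ on columns $i_1, \ldots, i_{r+1}$, with correctly alternating signs and a uniform exponent $e'_j=0$, so that $f$ collapses to a monomial times the $p^e$-th power of that $(r{+}1)$-minor. I expect this to require a further use of the generality of $M$, reading off a common row set for the $c_j$'s from the identity $\sum_j c_j(M)\, x_{i_j} \in I(\mathrm{row}(M))$ and matching signs through the Laplace expansion. The case $r = 2$, where the inductive characterization is unavailable for the binomials $c_j$, will need separate treatment using the explicit shape of binomials in the rank-one ideal and a direct verification that the resulting $f$ is a monomial times a $p^e$-th power of a $3$-minor.
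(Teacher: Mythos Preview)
Your Part~1 is the paper's argument, with one gap: before invoking Proposition~\ref{prop:Linear} you need $f(c,M)\not\equiv 0$ for generic rank-$r$ $M$, and nothing you have said rules out that every $h_\beta$ vanishes on rank-$r$ matrices of size $(m-1)\times n$. The paper fixes this by a preliminary reduction: if some $h_\beta$ vanishes on all rank-$r$ matrices, replace $f$ by that $h_\beta$ and decrease $m$; iterate until no coefficient does so, and only then specialize.

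For Part~2 your plan underestimates the work, and the paper proceeds differently. The step ``since $K$ is perfect the unique $p^e$-th root defines a polynomial $c_j$'' is not valid: perfectness gives $p^e$-th roots of scalars, not of polynomials, and the equality $h_\beta(M)=c_j(M)^{p^e}$ on the rank-$r$ locus does not force $h_\beta$ to be a $p^e$-th power in the polynomial ring. The paper avoids this entirely by first running the equality case of Proposition~\ref{prop:Linear} along \emph{every} row and column, not just one. This shows (after dividing by a monomial and discarding unused rows/columns) that $f$ involves exactly $r+1$ variables from each row and each column; a bipartite $(r+1)$-regular graph on $[m]\sqcup[n]$ then forces $m=n$, the Frobenius exponents are constant on connected components of this graph, and a matrix-completion trick (Lemma~\ref{lm:Completion}) lets one replace $f$ by a genuinely multilinear $\tilde f$. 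Only then does the induction on $r$ enter the characterization, and even in the multilinear setting the final assembly is not routine: for $r=2$ one evaluates $\tilde f$ on the rank-$2$ matrix $(a_i+b_j)_{ij}$ and invokes a combinatorial lemma about sets of six permutations (Lemma~\ref{lm:S3}); for $r\ge 3$ one uses a Gr\"obner-basis argument with anti-diagonal leading terms of minors (Lemma~\ref{lm:Dets}) to force the $r$-minors appearing as cofactors to lie inside a common $(r+1)\times(r+1)$ block. Your single-row viewpoint does not expose any of this structure.
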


\begin{re} \label{re:Binomials} As in Proposition~\ref{prop:Linear},
  and for the same reason, the case $r=1$ needs to be excluded in the
  second part of the theorem. Indeed, the variety of rank-$1$ matrices
  has a dense $(K^*)^m \times (K^*)^n$-orbit, and hence its ideal is
  spanned by binomials. Most of these binomials are not of the form in
  the theorem. However, we know exactly what they are, namely, (scalar
  multiples of) $x^\alpha - x^\beta$ where the $m \times n$-exponent
  matrices $\alpha$ and $\beta$ satisfy
  $\sum_j \alpha_{ij}=\sum_j \beta_{ij}$ for all $i$ and
  $\sum_i \alpha_{ij}=\sum_i \beta_{ij}$ for all $j$, and
  where $x^\alpha$ is short-hand for $\prod_{i,j}
  x_{ij}^{\alpha_{ij}}$. The proof of
  Theorem~\ref{thm:Mats} proceeds by induction on $r$, and for the
  second part we start with $r=2$, where this characterisation of
  binomials vanishing on rank-one matrices is used.
\end{re}

Before proceeding with the proof, we record a corollary over
arbitrary fields.

\begin{cor} \label{cor:Mats} Let $m,n,r$ be as in
  Theorem~\ref{thm:Mats}, and let $L$ be an arbitrary field. Then the
  ideal $I \subseteq L[x_{ij} \mid (i,j) \in [m] \times [n]]$
  generated by the $(r+1)$-minors of the matrix $x = (x_{ij})$
  contains no nonzero polynomials with fewer than $(r+1)!$ terms, and
  the only polynomials in $I$ with precisely $(r+1)!$ terms are those
  described in Theorem~\ref{thm:Mats}.
\end{cor}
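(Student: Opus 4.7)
The plan is to reduce to Theorem~\ref{thm:Mats} by passing to an algebraic closure $\overline{L}$ of $L$. Write $\overline{I}\subseteq\overline{L}[x_{ij}]$ for the ideal generated by the same $(r+1)$-minors of $x$, now over $\overline{L}$; clearly $\overline{I} = I\cdot\overline{L}[x_{ij}]$. Since the field extension $L\to\overline{L}$ is faithfully flat, so is $L[x_{ij}]\to\overline{L}[x_{ij}]$, and therefore $I=\overline{I}\cap L[x_{ij}]$. Every element of $\overline{I}$ vanishes on all rank-$\le r$ matrices in $\overline{L}^{m\times n}$ because each generating minor does, so Theorem~\ref{thm:Mats} applies to everything in~$\overline{I}$.

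The lower bound is then immediate: any nonzero $f\in I$, regarded as an element of $\overline{I}$ with unchanged monomial support, has at least $(r+1)!$ terms by Theorem~\ref{thm:Mats}. For the characterisation, assume $r\ge 2$ and suppose $f\in I$ has exactly $(r+1)!$ terms. Theorem~\ref{thm:Mats} supplies a factorisation $f=c\cdot x^\alpha\cdot M^{p^e}$ in $\overline{L}[x_{ij}]$ for some $c\in\overline{L}^\times$, a monomial $x^\alpha$, a nonnegative integer $e$, and an $(r+1)$-minor $M$ of~$x$.

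The only point left is to argue that in fact $c\in L$. The minor $M$ is a signed sum of $(r+1)!$ distinct squarefree monomials with coefficients $\pm 1$ in the prime field of~$L$. The Frobenius is a ring homomorphism in positive characteristic (and $p^e=1$ trivially when $p=1$), so $M^{p^e}$ is a signed sum of the $p^e$-th powers of those monomials; these remain distinct because raising to the $p^e$-th power multiplies exponent vectors by $p^e$ and is hence injective on monomials. Therefore every coefficient of $f$ equals $\pm c$, and since $f\in L[x_{ij}]$ has a nonzero coefficient in~$L$, we conclude $c\in L^\times$, whence $f$ has precisely the form described in Theorem~\ref{thm:Mats}. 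The one ingredient needed besides Theorem~\ref{thm:Mats} is the descent identity $I=\overline{I}\cap L[x_{ij}]$, a standard consequence of faithful flatness, so no real obstacle is anticipated.
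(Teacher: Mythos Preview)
Your proof is correct and follows the same idea as the paper's one-line argument: pass to an algebraic closure and invoke Theorem~\ref{thm:Mats}. Two small remarks: the descent identity $I=\overline{I}\cap L[x_{ij}]$ is never actually used---you only need the trivial inclusion $I\subseteq\overline{I}$ (equivalently, that $f\in I$ vanishes on rank-$\le r$ matrices over $\overline{L}$), so the faithful-flatness discussion can be dropped; on the other hand, your verification that the scalar $c$ lies in $L$ is a detail the paper leaves implicit, and it is good that you spelled it out.
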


\begin{proof}[Proof of the corollary]
Let $K$ be an algebraic closure of $L$. Then any polynomial $f$ in $I$
vanishes on all matrices in $K^{m \times n}$ of rank at most $r$. Hence $f$
is of the form in Theorem~\ref{thm:Mats}.
\end{proof}

\begin{re}
  We do not know whether Corollary~\ref{cor:Mats} still holds if one
  allows to do an arbitrary invertible linear change of the $n^2$
  coordinates.  We suspect that this cannot reduce the minimal number
  of monomials in a nonzero polynomial in the ideal $I$.
\end{re}

\begin{proof}[Proof of Theorem~\ref{thm:Mats}]\ \\
\noindent {\bf Part 1: proof of the lower bound $(r+1)!$} We proceed
by induction on~$r$. For $r=0$ the statement is evidently true. Now we
suppose that $r \geq 1$ and that the statement is true for $r-1$.

Let $f$ be a nonzero polynomial that vanishes on all rank-$r$
matrices.  Then $m,n>r$.  Furthermore, since the matrices of rank at
most $r$ form an affine cone, any homogeneous component of $f$ also
vanishes on them, hence we may assume that $f$ is homogeneous of
positive degree.

Let $x_m=(x_{m1},\ldots,x_{mn})$ be variables representing the last
row of the matrix, and write
\[ f=\sum_{\alpha \in S} f_\alpha x_m^\alpha. \]
where $S$ is a finite subset of $\ZZ_{\geq 0}^n$ and the
$f_\alpha$ are nonzero polynomials in the entries of the
first $m-1$ rows. 
If some $f_\alpha$ vanishes identically on rank-$r$
matrices, then we replace
$m$ by $m-1$ and $f$ by that~$f_\alpha$.  After finitely many such
steps, we reach a situation where no $f_\alpha$ vanishes identically on
rank-$r$ matrices.

Each $f_\alpha$ vanishes on every rank-$(r-1)$ matrix of size
$(m-1) \times n$. Indeed, if $A$ is such a matrix, then $f(A,x_m)$ is
the zero polynomial because appending any $m$-th row to~$A$ yields a
matrix of rank at most $r$, on which $f$ was assumed to vanish.  By
the induction assumption, each $f_\alpha$ has at least $(r-1)!$ terms.

On the other hand, since no $f_{\alpha}$ vanishes on all rank-$r$
matrices, for any very general $(m-1) \times n$-matrix $A$ of
rank~$r$, we have $f_\alpha(A) \neq 0$ for all $\alpha \in S$.  Now
$f(A,x_m)$ vanishes identically on the $r$-dimensional row space
of~$A$. We may further assume that the row space $U \subseteq K^n$ of
$A$ is very general in the sense of
Proposition~\ref{prop:Linear}. Then, by that proposition, $f(A,x_m)$
has at least $r+1$ terms, and hence $f$ has at least
$(r+1) \cdot r!=(r+1)!$ terms.  \medskip

\noindent
{\bf Part 2: proof of the characterisation.} 
Now assume that equality holds. Then by Proposition~\ref{prop:Linear},
$f(A,x_m)$ is a monomial times a linear combination of $p^a$-th powers
of variables, for some $a \in \ZZ_{\geq 0}$. After dividing by that
monomial, it is just a linear combination of $p^a$-th powers of
variables. Furthermore, the same argument applies to {\em any} row or
column of the matrix, so (after discarding rows and columns on which
$f$ does not depend, and dividing by suitable monomials) $f$ is a
linear combination of $p^a$-th powers of the variables in {\em every}
row/column and involves precisely $r+1$ of them. {\em A priori}, the
exponents $p^a$ depend on the row/column, though if the entry on
position $(i,j)$ appears in $f$ then the exponent $p^a$ for the $i$-th
row and that for the $j$-th column are the same.

This leads us to consider a bipartite graph $\Gamma$ on $[m] \sqcup [n]$ with an
edge $(i,j)$ if the variable $x_{i,j}$ appears in $f$. The
graph $\Gamma$ is 
regular of degree $r+1$, and this implies that $n=m$.  If $x_{i,j}$
appears in $f$, then it does so with exponent $p^a$, and we give
the edge $(i,j)$ the label $a$. The edge labels are constant on connected
components of $\Gamma$. Let $M_i \sqcup N_i,\ i=1,\ldots,q$ be the
vertex sets of those connected components. So both the $M_i$ and the
$N_i$ form partitions of $[m]=[n]$; the $M_i$ label rows, and the $N_i$
label columns. Regularity of the graph implies that $|M_i|=|N_i|$.
After reordering row indices and column indices, we may assume that
$M_1,M_2,M_3,\ldots,M_q$ are consecutive intervals, and that $N_i=M_i$.
Then $f$ depends only on the variables in the blocks of a block diagonal
matrix with square diagonal blocks labelled by $M_1 \times N_1, M_2 \times N_2,
\ldots,M_q \times N_q$.

Let $a_i$ be the common edge label of the edges between the edges in
$M_i$ and $N_i$, i.e.\ all variables $x_{kl}$ with $k \in M_i$ and $l \in
N_i$ appear with exponent $p^{a_i}$ in $f$. By basic linear
algebra (Lemma~\ref{lm:Completion} below)
any $q$-tuple of diagonal blocks $A_i \in K^{M_i \times
N_i}$ for $i=1,\ldots,q$ that are all of rank $\leq r$ can be extended
to a matrix $A \in K^{m \times n}$ of rank at most $r$, and hence $f$
vanishes on such a tuple $(A_1,\ldots,A_q)$.  Now applying the field
automorphism $\alpha \mapsto \alpha^{p^{-a_i}}$ to all entries in $A_i$
yields a matrix $\tilde{A}_i$ which is again of rank $r$, and hence $f$
vanishes on the $q$-tuple $(\tilde{A}_1,\ldots,\tilde{A}_q)$. But this
means that the polynomial $\tilde{f}$ obtained from $f$ by replacing each
$x_{kl}^{p^{a_i}}$ (with $(k,l) \in M_i \times N_i$) by $x_{kl}$ vanishes
on $(A_1,\ldots,A_q)$. By construction, $\tilde{f}$ vanishes
on all rank-$r$ matrices, has $(r+1)!$ terms
and is now multilinear in the $m$ rows and $m$ columns. We are done
if we can show that $q=1$,
$M_1=N_1=[r+1]$, and $\tilde{f}$ is a scalar multiple of the $(r+1)
\times (r+1)$-determinant.

Without loss of generality we have
\[ \tilde{f}=\tilde{f}_1 x_{m,1} + \cdots + \tilde{f}_{r+1} x_{m,r+1}, \]
where $\tilde{f}_j$ is a polynomial with $r!$ terms that is
multilinear in the first $m-1$ rows and in all but the
$j$-th column and that vanishes on all rank-$(r-1)$
matrices. 

We again proceed by induction on $r$. First consider the base case
where $r=2$. By Remark~\ref{re:Binomials}, each $\tilde{f}_j$ is of
the form a constant times $x^\alpha-x^\beta$ where
$\alpha,\beta \in \ZZ^{[m-1] \times ([m] \setminus \{j\})}$ are
permutation matrices. Thus $\tilde{f}$ itself has $6$ monomials of the
form $x^\gamma$, where $\gamma \in \ZZ^{[m] \times [m]}$ is a
permutation matrix, and these terms have the property that for each
$\gamma$ there is precisely one $\gamma' \neq \gamma$ whose last row
agrees with that of $\gamma$. This argument applies to all
rows. Furthermore, $\tilde{f}$ vanishes on all rank-$2$ matrices,
hence in particular on the matrix $(a_i+b_j)_{i,j}$ where $a$ and $b$
are vectors of variables. Evaluating $x^\gamma$ on this matrix yields
\[ \sum_{I \subseteq [m]} \left(\prod_{i \in I} a_i \right) \cdot
  \left(\prod_{i \in [m] \setminus I} b_{\gamma(i)}\right) \]
where we have abused notation and written $\gamma \in S_m$ for the
permutation determined by $\gamma(i)=j$ if and only if the permutation
matrix $\gamma$ has a $1$ on position $(i,j)$.  Then the monomial
corresponding to $I$ uniquely determines and is determined
by~$\gamma(I)$.  In $\tilde{f}$, this monomial appears with a nonzero
coefficient in the term corresponding to $\gamma$, so it appears in at
least one more term. By Lemma~\ref{lm:S3} below, we have $m=3$, and
$\tilde{f}$ is a scalar multiple of the $3 \times 3$-determinant.

If $r \geq 3$, then, by induction, each $\tilde{f}_j$ is a one-term
multiple of an $r$-minor in the
$[m-1] \times ([m] \setminus \{j\})$-submatrix, and a similar
expansion exists for all rows and columns. Then Lemma~\ref{lm:Dets}
below shows that $m=r$ and that $f$ is a scalar multiple of the
$(r+1)$-minor, as desired.
\end{proof}

\begin{lm} \label{lm:S3}
Let $n$ be a natural number, $S_n$ the symmetric group, and $P \subseteq
S_n$ a subset with $|P|=6$ such that for all $I,J \subseteq [n]$,
the set $\{\pi \in P \mid \pi(I)=J\}$ has cardinality $0$ or $\geq 2$,
and cardinality equal to $0$ or $2$ if $|I|=|J|=1$. Then $n=3$ and
$P=S_3$. 
\end{lm}

For the following proof we thank Rob Eggermont.
\begin{proof}
The assumptions on $P$ are preserved under left and right multiplication,
i.e., replacing $P$ by $\tau P \sigma^{-1}$ for any $\tau,\sigma \in
S_n$. Using left and right multiplication, we may assume that $P$ contains the
identity element $e$. Under this additional assumption on $P$ we may
not use left and right multiplication anymore, but we may still use
conjugation.  The set $P$ contains precisely one other element, which
we dub $\pi_{23}$, that maps $\{1\}$ to $\{1\}$, and after conjugating
we may assume that $\pi_{23}(2)=3$. 

The set $\{\pi \in P \mid \pi(\{1,2\})=\{1,2\}\}$ has 
cardinality at least $2$, contains $e$, and
hence contains at least one further element, which we dub $\pi_{12} \neq
e$. This does not map $\{1\}$ to $\{1\}$, and hence $\pi_{12}$
interchanges $1$ and $2$.  Similarly, $P$ contains an element
$\pi_{13}$ which interchanges $1$ and $3$. Furthermore, since
$\pi_{12}(2)=1$, $P$ contains a further element
$\pi_{132} \neq \pi_{12}$ that maps $\{2\}$ to $\{1\}$, and since
$\pi_{13}(3)=1$, $P$ contains one further element
$\pi_{123} \neq \pi_{13}$ that maps $\{3\}$ to $\{1\}$.

Now $\pi_{12},\pi_{13},\pi_{132},\pi_{123}$ do not map $\{2,3\}$ to
itself, but $e$ does, hence so does $\pi_{23}$.
The following summarises what we know about the permutations so far:
\begin{align*}
e&=\id_{[n]} & \pi_{23}&:1 \to 1, 2 \leftrightarrow 3 \\
\pi_{12}&:1 \leftrightarrow 2 &
\pi_{13}&:1 \leftrightarrow 3 \\
\pi_{132}&:2 \to 1 &
\pi_{123}&:3 \to 1
\end{align*}
and by construction all of these elements are distinct, so they
exhaust $P$. 

The set $\{1,k\}$ for $k > 3$ is mapped to itself by $e$,
hence there is at least one other element of $P$ that does so, and
$\pi_{12},\pi_{13},\pi_{132},\pi_{123}$ clearly do not, so $\pi_{23}(k)=k$
and $\pi_{23}$ is the transposition $(2,3)$. 

The set $\{3\}$ can only be mapped to $\{2\}$ by ($\pi_{23}$ and) $\pi_{132}$,
so we find that $\pi_{132}(3)=2$. Then, apart from $e$, $\pi_{12}$ is
the only element of $P$ that can map $\{3\}$ to itself, so it must do
so: $\pi_{12}(3)=3$. Now $\{3,k\}$ for $k>3$ is mapped to itself by $e$
and the only other element that can potentially do so is $\pi_{12}$,
so $\pi_{12}=(1,2)$. Using $\{2,k\}$ instead, we find that $\pi_{13}=(1,3)$.

Now $\pi_{12}$ maps $\{2,k\}$ for $k>3$ to $\{1,k\}$, and the only other
element that can do so is $\pi_{132}$, so we find that
$\pi_{132}=(1,3,2)$. Similarly, $\pi_{13}$ maps $\{3,k\}$ for $k>3$ to
$\{1,k\}$, and the only other element that can do so is $\pi_{123}$,
hence $\pi_{123}=(1,2,3)$. 

We have thus established that $P \subseteq S_3 \subseteq S_n$, but then $\pi(k)=k$ for all
$\pi \in P$ and $k>3$, and this violates the assumption in the lemma
that precisely zero or two permutations map $\{k\}$ to $\{k\}$. It
follows that $n=3$ and $P=S_3$.
\end{proof}

\begin{lm} \label{lm:Dets} Let $r \geq 3$ and let
  $f \in K[x_{ij} \mid i,j \in [m]]$ be a polynomial in the entries of
  a generic matrix $x = (x_{ij})$ with the following properties:
\begin{enumerate}
\item $f$ vanishes on all matrices of rank $r$; 

\item for every row index $i \in [m]$, $f$ admits an expansion
\[ f=x_{i,j_1} f_1 + \cdots + x_{i,j_{r+1}} f_{r+1} \]
where $j_1<\ldots<j_{r+1}$ and where each $f_l$ is a
polynomial in the entries of the $([m] \setminus \{i\})
\times ([m] \setminus \{j_l\})$-submatrix $z$ of $x$ of the
following form: a scalar times a monomial times some 
$r$-minor of $z$; 

\item and similarly for column indices.
\end{enumerate}
Then $m=r+1$ and $f$ is a scalar multiple of the $(r+1) \times
(r+1)$-determinant of $x$.
\end{lm}

In the proof of the lemma, we use that the ideal of all polynomials
vanishing on rank-$r$ matrices has a Gr\"obner basis consisting of
$(r+1)$-minors for the lexicographic order given by 
\[
x_{1,m}>x_{1,m-1}>\ldots>x_{1,1}>x_{2,m}>\ldots>x_{2,1}>x_{3,m}>\ldots>x_{m,1}.
\]
For these results see \cite[Theorem 1]{St89} and the overview article
\cite{BrunsConca03}.  Note that the leading term of an
$(r+1)\times (r+1)$-determinant equals the product of the entries of
the main anti-diagonal.

\begin{proof}
From the expansion, we see that each variable in $f$ is
contained in precisely $r!$ terms, and that each monomial in
$f$ is of the form $x^\gamma$ with $\gamma$ an $m \times m$-permutation
matrix. 

Next we count variables. Since $f$ contains $r+1$ variables in each row,
the total number of variables in $f$ equals $m(r+1)$. After
permuting the columns of $x$, we may
assume that the expansion along the first row looks as
follows: 
\[ f=x_{1,1} f_1 + \cdots + x_{1,r+1} f_{r+1}. \]
Each $f_l$ contains $r^2$ variables in its determinant, denoted
$\det_l$ in the following, plus $(m-1-r)$ further distinct variables
in a monomial~$u_l$.  The variables in $u_l$ are also distinct from
the variables in the $f_n$ with $n \neq l$, because the former already
appear in all $r!$ terms of $f_l$, and can thus not appear
again. Counting also the $r-1$ variables $x_{1,l}$, we see
$(r+1)(m-r)$ variables outside the $\det_{l}$. This means that the
determinants use only $(r+1)r$ variables.  But then they are the
$r$-minors of an $r \times (r+1)$- or $(r+1) \times r$-submatrix~$y$
of the last $m-1$ rows of~$x$.

For a contradiction assume $y$ is \emph{not} contained in the first
$r+1$ columns. Then we can permute the first $r+1$ columns of $x$ so
that $\det_{r+1}$ is not contained in the first $r+1$ columns and uses
$r$ consecutive columns with labels in $[m] \setminus \{r+1\}$ with at
least one label larger than $r+1$.  Then we can further arrange the
last $m-1$ rows of $x$ so that the the rows in $\det_{r+1}$ are
consecutive, and the variables in $u_{r+1}$ are arranged pointing in a
down-right direction as do the black squares in
Figure~\ref{fig:matrix}, with those in the first $r$ columns coming in
rows before those of $\det_{r+1}$, and those beyond the first $r+1$
columns coming in rows after $\det_{r+1}$.

\begin{figure}
\includegraphics{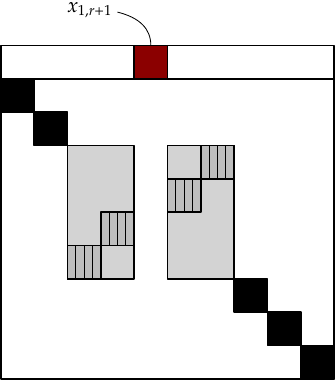}
\caption{The variables in the monomial $u_{r+1}$ are
represented by black boxes, the variables in the determinant in $f_{r+1}$
by gray boxes.}
\label{fig:matrix}
\end{figure}

Now consider the leading monomial of $f$ in the lexicographic order.
It is the product of the following factors: $x_{1,r+1}$, $u_{r+1}$
consisting of the black variables in Figure~\ref{fig:matrix}, and the
darker gray variables on the antidiagonal of the
$r \times r$-determinant in $f_{r+1}$. But this is not divisible by
the leading monomial of any $(r+1)$-minor, a contradiction showing
that $y$ is contained in the first $r+1$ columns of~$x$.
Then $y$ is in fact an $r \times (r+1)$ submatrix in the first $r+1$
columns of $x$; indeed, if it were an $(r+1) \times r$-submatrix, then
for any column index $j \in [r+1]$ appearing in $y$, $y$ could not
contain the $r$-minor $\det_j$ in $f_j$, simply because $y$ is too
narrow.

We relabel the rows such that $y$ is the submatrix of $x$ labelled by
$\{2,\ldots,r+1\} \times [r+1]$. Then each $f_j$ is the determinant
$\det_j$ of the
$\{2,\ldots,r+1\} \times ([r+1]\setminus \{j\})$-submatrix of $x$
times a constant $c_j$ times a monomial $u_j$ with a variable from
each of the last $m-r-1$ rows and the last $m-r-1$ columns. We claim
that all $u_j$ are equal. Indeed, let $g$ be $c_{r+1} u_{r+1}$ times
the $[r+1] \times [r+1]$-subdeterminant of $x$. Then
\begin{align*} 
g&=\sum_{j=1}^{r+1} (-1)^{j-1} c_{r+1} u_{r+1} x_{1,j} \det\ \hspace{-1ex}_j 
\text{ and }\\
h&:=f+(-1)^{r+1}g=\sum_{j=1}^{r+1} x_{1,j} (c_j u_j + 
(-1)^{r+j}c_{r+1} u_{r+1}) \det\ \hspace{-1ex}_j.
\end{align*}
In $h$ the term with $j=r+1$ cancels.  Now if $h$ is nonzero, then one
of the terms $(c_j u_j + (-1)^{r+j} c_{r+1} u_{r+1})\det_j$ with
$j<r+1$ is nonzero, and it does not vanish on any very general
rank-$r$ matrix $A \in K^{([m]\setminus \{1\}) \times [m]}$. But then,
as in the proof of Theorem~\ref{thm:Mats} above, $h(A,x_1)$ is a
nonzero polynomial with fewer than $r+1$ terms that vanishes
identically on the row space of $A$, a contradiction to
Proposition~\ref{prop:Linear}.

We conclude that $h=0$, and this implies that all $u_j$ are equal to
$u_{r+1}$. But then $f$ involves only one variable from each of the
last $m-(r+1)$ rows.  Since it also contains $r+1$ variables from each
of these, we conclude that $m=r+1$, and $f$ is a scalar
multiple of the determinant.
\end{proof}

We conclude this section with the following simple matrix
completion problem.

\begin{lm} \label{lm:Completion}
Let $m,m_1,m_2,r \geq 0$ be nonnegative integers, and suppose that
$m=m_1+m_2$. Denote by $X_m$ the variety of $m \times m$-matrices of
rank at most $r$. Then the projection $X_m \to X_{m_1} \times X_{m_2}$
that maps a matrix to its diagonal blocks is surjective. 
\end{lm}

In the proof of Theorem~\ref{thm:Mats} the corresponding
statement is used with $q$ factors, and this follows 
by induction from the case $q=2$. 

\begin{proof}
Let $(A_1,A_2) \in X_{m_1} \times X_{m_2}$. Then $A_i=B_i
\cdot C_i$ for certain $B_i \in K^{m_i \times r}, C_i \in K^{r
\times m_i}$. But then 
\[ X_m \ni \begin{bmatrix} B_1 \\ B_2 \end{bmatrix}
\cdot 
\begin{bmatrix} C_1 & C_2 \end{bmatrix}
=
\begin{bmatrix} A_1 & * \\ * & A_2 \end{bmatrix}  \]
is a matrix of rank at most $r$ with the desired diagonal
blocks. 
\end{proof}

\subsection*{Relations to polynomial identity testing}

After the first version of this paper was posted, Robert Andrews pointed
out to us that Theorem~\ref{thm:Mats} has a (modest) application to
polynomial identity testing for sparse polynomials. 
Consider the subset $P_{t,N}\subset\QQ[y_1,\ldots,y_N]$ of nonzero polynomials
with fewer than $t$ terms. For our restricted purpose, a {\em
  hitting set generator} for $P_{t,N}$ is a polynomial map
$\phi\colon \QQ^M\to \QQ^N$ such that $f \circ \phi$ is nonzero
for every $f \in P_{t,N}$. One typically wants $M$ to be much
smaller than $N$ and the components of $\phi$ to be easy-to-evaluate
polynomials in $M$ variables.

Assume that $t=(r+1)!$ and $N=n^2$.  By Theorem~\ref{thm:Mats}, the
multiplication map
\[ \QQ^{n \times r} \times \QQ^{r \times n} \to \QQ^{n
\times n} = \QQ^N,\ (A,B) \mapsto A\cdot B \]
is a degree-two hitting set generator of $P_{t,N}$. Expressing all in $t$
and $N$ and using Stirling's approximation, we have
\[ M \leq c \cdot N^{1/2} \cdot \log(t)/\log(\log(t)) \]
for some (explicit) constant $c$.  On the other hand, any degree-two
hitting set generator for $P_{t,N}$ with $t\le N$ necessarily has $M$
at least some constant times $N^{1/2}$, so the above is nearly
optimal.
It should be mentioned, though, that polynomial identity testing for
polynomials with a bounded number of terms can be done
deterministically in polynomial time \cite{KlivansSpielman01}, so that
this hitting set generator may not be immensely useful.

For another link of our work to polynomial identity testing, we refer
to \cite{AndrewsForbes21}, where it is shown that any nonzero element
$f$ in the ideal generated by $(r+1) \times (r+1)$-minors can be used
as an oracle in the construction of a small circuit that approximately
computes the $s \times s$-determinant, for $s=\Theta(r^{1/3})$. This can
be understood as expressing that such a polynomial has high {\em border
complexity}, a different measure of complexity than the number of terms
considered in this paper.

\section{No short polynomials vanish on bounded-rank 
    skew symmetric matrices}
\label{sec:skew-symmetric}

We now focus on square and skew-symmetric matrices $A$; it is well known
that these have even rank. The coordinates on the space of
skew-symmetric $n \times n$-matrices are, say, the
$\binom{n}{2}$ matrix entries strictly below the diagonal. 

Let $r$ be an even integer. If $A$ has rank at most $r$, then in
particular all principal $(r+2)$-Pfaffians vanish on $A$.  These
Pfaffians have $(r+1)!!=(r+1) \cdot (r-1) \cdots \cdot 1$ terms, in
bijection with the perfect matchings in the complete graph on $r+2$
vertices. This is fewer than the $(r+1)!$ from the previous section,
except when $r=0$, when the two agree.  The following theorem says
that there are no shorter polynomials.

\begin{thm}\label{thm:skew}
  Let $r$ be even and let $m \geq r$.  There is no nonzero polynomial
  vanishishing on all skew-symmetric $m \times m$-matrices of rank
  $\leq r$ that has fewer than $(r+1)!!$ terms.  Furthermore, any
  polynomial with $(r+1)!!$ terms that vanishes on all skew-symmetric
  $m \times m$-matrices of rank $r$ is a one-term multiple of a
  $p^e$-th power of some principal $(r+2)$-Pfaffian, for some
  $e \in \ZZ_{\geq 0}$.
\end{thm}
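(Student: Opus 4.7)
The plan is to mirror the proof of Theorem~\ref{thm:Mats}, inducting on the even rank $r$ in steps of two. The factor $(r+1)$ in the identity $(r+1)!! = (r+1)\cdot(r-1)!!$ will come from Proposition~\ref{prop:Linear} applied to the column space of a generic rank-$r$ submatrix, while the factor $(r-1)!!$ will come from the inductive hypothesis applied to the coefficients of an expansion along the last row. The base case $r=0$ is immediate: the only such matrix is zero, and the one-term polynomials vanishing there are the monomials, each a one-term multiple of some $x_{ij}^{p^e}$, i.e.\ of a $p^e$-th power of a principal $2$-Pfaffian.

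For $r\geq 2$, let $f$ vanish on all skew-symmetric $m\times m$-matrices of rank $\leq r$; I may assume $f$ is homogeneous. Writing $x_m=(x_{m,1},\ldots,x_{m,m-1})$ for the last-row variables, which by skew-symmetry also determine the last column, expand
\[ f=\sum_{\alpha\in S} f_\alpha\, x_m^\alpha, \]
where each $f_\alpha$ is a polynomial in the entries of the principal $(m-1)\times(m-1)$-submatrix. After discarding rows/columns on which $f$ does not depend and recursing if some $f_\alpha$ already vanishes on all rank-$r$ matrices of that size, we may assume no such $f_\alpha$ does. Any skew-symmetric $(m-1)\times(m-1)$-matrix of rank at most $r-2$ extends, for any choice of last row, to a skew-symmetric $m\times m$-matrix of rank at most $r$ (the rank of a principal extension of a skew form jumps by $0$ or $2$), so each $f_\alpha$ vanishes on all such matrices and, by the inductive hypothesis, has at least $(r-1)!!$ terms.

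For the other factor, let $A$ be a sufficiently general skew-symmetric $(m-1)\times(m-1)$-matrix of rank~$r$. Principally extending $A$ by a row~$v$ keeps the rank equal to $r$ exactly when $v$ lies in the $r$-dimensional column space $U$ of~$A$. Writing any rank-$r$ skew form as $BJB^T$ with $J$ a fixed invertible skew form on $K^r$ shows that every $U\in\Gr_r(K^{m-1})$ arises as such a column space, so $U$ may be taken sufficiently general in the sense of Proposition~\ref{prop:Linear}. Since $f(A,x_m)$ vanishes on $U$ while none of the coefficients $f_\alpha(A)$ is zero, $f(A,x_m)$ has at least $r+1$ terms, yielding the required $(r+1)!!$ in total. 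If equality holds, Proposition~\ref{prop:Linear} forces $f(A,x_m)$ to be a monomial times the $p^e$-th power of a linear form supported on $r+1$ variables; the same conclusion applies along every row and column, and by induction each $f_\alpha$ is a one-term multiple of a $p^e$-th power of some principal $r$-Pfaffian of the complementary principal skew submatrix.

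The main obstacle will be the final combinatorial identification, a Pfaffian analog of Lemma~\ref{lm:Dets}. A $B$-invariance / generic-initial-space analysis parallel to the determinant case should align the supports across rows and columns, produce a common exponent $p^e$ that can be stripped off, and reduce everything to a multilinear polynomial $\tilde f$ with $(r+1)!!$ terms supported on a principal $(r+2)\times(r+2)$ block. Following the right-hand picture of Figure~\ref{fig:matrix}, I would then run a lexicographic Gröbner basis argument using that the $(r+2)$-Pfaffians form a Gröbner basis for the ideal of skew-symmetric rank-$r$ matrices, matching the leading monomial of $\tilde f$ against that of a principal Pfaffian to force the correct support. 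The base case $r=2$ will rest on a matching-theoretic analog of Lemma~\ref{lm:S3}: any triple of perfect matchings on $[m]$ satisfying the incidence constraints imposed by the row- and column-expansions must consist of the three perfect matchings on $[4]$. Piecing these ingredients together identifies $f$ as a one-term multiple of a $p^e$-th power of a principal $(r+2)$-Pfaffian.
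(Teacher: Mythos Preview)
Your outline matches the paper's proof closely, and Part~1 is correct; your justification via $BJB^T$ that every $U \in \Gr_r(K^{m-1})$ occurs as the column space of a rank-$r$ skew form is a bit more explicit than what the paper writes.

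For Part~2, the paper likewise constructs a graph on $[m]$ (now not bipartite), uses a skew-symmetric completion lemma (Lemma~\ref{lm:Completion2}) to strip the Frobenius exponents and pass to a multilinear $\tilde f$, and then invokes a Pfaffian analog of Lemma~\ref{lm:Dets}, namely Lemma~\ref{lm:Pfaffs}, whose proof is exactly the variable-count plus lexicographic Gr\"obner argument you sketch. The one place your plan diverges is the proposed separate base case $r=2$ via a matching-theoretic analog of Lemma~\ref{lm:S3}: this is not needed. Because the Pfaffian induction moves in steps of two, the coefficients $f_\alpha$ at $r=2$ vanish on rank-$0$ skew matrices and are therefore single monomials---already one-term multiples of $2$-Pfaffians, which is precisely the hypothesis required by Lemma~\ref{lm:Pfaffs}. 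The $r=1$ pathology of Proposition~\ref{prop:Linear} (Remark~\ref{re:Dim1}) that forced the detour through Lemma~\ref{lm:S3} in the determinant case is simply never encountered here, so Lemma~\ref{lm:Pfaffs} handles all even $r \geq 2$ uniformly. Also, the support on a principal $(r+2)\times(r+2)$ block is not part of the reduction to $\tilde f$; it is a \emph{conclusion} of Lemma~\ref{lm:Pfaffs}, established there by the variable count and the Gr\"obner argument.
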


Before proceeding with the proof, we record an immediate consequence of
the theorem.

\begin{cor} \label{cor:skew} Let $r$ be even and let $m \geq r$. For
  any field $L$, the ideal $I$ in the polynomial ring
  $K[x_{ij} \mid 1 \leq i<j \leq m]$ generated by the $r$-Pfaffians of
  the matrix $x$ does not contain polynomials with fewer than
  $(r+1)!!$ terms, and the only polynomials in $I$ with $(r+1)!!$
  terms are those in Theorem~\ref{thm:skew}.
\end{cor}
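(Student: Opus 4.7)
The plan is to mimic the proof of Corollary~\ref{cor:Mats}: pass to an algebraic closure of the ground field and invoke Theorem~\ref{thm:skew}. Fix an algebraic closure $K$ of $L$, and view the polynomial ring $L[x_{ij} \mid 1 \leq i<j \leq m]$ as a subring of $K[x_{ij} \mid 1 \leq i<j \leq m]$; then the ideal $I$ sits inside the ideal of $K[x_{ij}]$ generated by the $(r+2)$-Pfaffians of the generic skew-symmetric matrix $x$.

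The key observation is that each such Pfaffian is a $\ZZ$-linear combination of monomials with coefficients in $\{-1,0,1\}$, and vanishes identically on every skew-symmetric matrix of rank at most $r$ over an arbitrary field. Consequently any $f \in I$, regarded as an element of $K[x_{ij}]$, vanishes on all skew-symmetric matrices of rank at most $r$ in $K^{m \times m}$. If $f \neq 0$, Theorem~\ref{thm:skew} then yields both that $f$ has at least $(r+1)!!$ terms and, in the case of equality, a representation $f = c \cdot u \cdot P^{p^e}$ for some scalar $c \in K$, monomial $u$, principal $(r+2)$-Pfaffian $P$, and nonnegative integer $e$.

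The only remaining point is to check that this representation actually lives over $L$. Since $u$ and $P$ have coefficients in $\ZZ$ and $f \in L[x_{ij}]$, comparing any single coefficient on the two sides forces $c \in L$, so the factorisation descends. No serious obstacle arises; all substance sits inside Theorem~\ref{thm:skew}, and the corollary merely records that both the lower bound and the characterisation of equality are insensitive to extending the coefficient field.
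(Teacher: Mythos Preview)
Your proof is correct and follows the same approach as the paper: pass to an algebraic closure and invoke Theorem~\ref{thm:skew}. You add the explicit check that the scalar $c$ in the factorisation $f = c \cdot u \cdot P^{p^e}$ lies in $L$, which the paper leaves implicit; this is a harmless elaboration, since $u \cdot P^{p^e}$ has integer coefficients and comparing any nonzero term forces $c \in L$.
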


\begin{proof}
Such a polynomial vanishes on all skew-symmetric matrices in
$K^{m \times m}$, where $K$ is an algebraic closure of $L$.
Now apply Theorem~\ref{thm:skew}. 
\end{proof}

\begin{proof}[Proof of Theorem~\ref{thm:skew}]
  The proof proceeds along the same lines as that of
  Theorem~\ref{thm:Mats}. Again, we proceed by induction on $r$. For
  $r=0$, the $2$-Pfaffians are precisely the matrix entries, which of
  course are the shortest nonzero polynomials vanishing on the zero
  matrix.  \medskip

\noindent
{\bf Part 1: proof of the lower bound $(r+1)!!$}
Assume that $r \geq 2$ and decompose 
\[ f=\sum_{\alpha \in S} f_\alpha x_m^\alpha. \]
where $x_m$ consists of the first $m-1$ entries of the last row---which are,
up to a sign, also the first $m-1$ entries of the last
column---and where the $f_\alpha$ are nonzero polynomials in the
(lower-triangular) entries of the top left $(m-1) \times (m-1)$-block. 

Now all $f_\alpha$ vanish on all skew symmetric matrices $A$ of rank at most
$r-2$. Indeed, for an arbitrary row vector $u \in K^{m-1}$,
the skew symmetric matrix
\[ \begin{bmatrix} A & -u^T \\ u & 0 \end{bmatrix} \]
has rank at most $r$, and hence $f$ vanishes on it.  Therefore, if
some $f_\alpha(A) \neq 0$, then $f(A,x_m)$ is a nonzero polynomial
that vanishes identically on $K^{m-1}$, a contradiction since $K$ is
infinite. From the induction hypothesis, we conclude that each
$f_\alpha$ has at least $(r-1)!!$ terms, with equality if and only if
it is a one-term multiple of a $p^e$-th power of some principal
$r$-Pfaffian.

We may further assume that no $f_\alpha$ vanishes identically on
rank-$r$ skew-symmetric matrices; otherwise, we would replace $f$ by
$f_\alpha$. Pick a very general skew symmetric matrix
$A \in K^{(m-1) \times (m-1)}$ of rank $r$. Then
$f_{\alpha}(A) \neq 0$ for all $\alpha$, and we claim that $f(A,x_m)$
is a polynomial that vanishes identically on the row space of
$A$. Indeed, if $u$ is in the row space of $A$, then appending it to
$A$ as an $m$-th row does not increase the rank of $A$, and then
appending $-u^T$, along with a zero, as the last column, could only
increase the rank by $1$, but since a skew-symmetric matrix has even
rank, it does not.  Hence $f$ vanishes on the resulting matrix, and
thus $f(A,x_m)$ vanishes on the row space of $A$.  By
Proposition~\ref{prop:Linear}, at least $r+1$ of the $f_\alpha$ are
nonzero. Therefore, $f$ has at least $(r+1) \cdot ((r-1)!!)=(r+1)!!$
terms, as desired.  \medskip

\noindent
{\bf Part 2: proof of the characterisation.}
Assume that equality holds. By Proposition~\ref{prop:Linear}, after
dividing $f$ by a monomial in the variables of the last row,
discarding rows (and corresponding columns) on which $f$ does not
depend, and rearranging columns if necessary, the $x_m^\alpha$ are
equal to $x_{m,i}^{p^a}$ for some common exponent~$a$.  The same
applies to all rows.  Like in the case of ordinary matrices, we
construct an undirected graph $\Gamma$, now not necessarily bipartite,
on $[m]$ in which $\{i,j\}$ is an edge if and only if $x_{ij}$ appears
in~$f$. The exponents $a$ are constant on the connected components of
$\Gamma$, and by the same argument as in the proof of
Theorem~\ref{thm:Mats}, now using Lemma~\ref{lm:Completion2} below for
the matrix completion, we may replace $f$ by an $\tilde{f}$ which is
linear in the rows. By Lemma~\ref{lm:Pfaffs} below, $m=r+2$ and
$\tilde{f}$ is a scalar multiple of a Pfaffian; in particular,
$\Gamma$ is connected and $f$ is a $p^a$-th power of $\tilde{f}$.
\end{proof}

\begin{lm} \label{lm:Pfaffs}
Let $r \geq 2$ be even. Assume that $f$ is a polynomial in the entries
of a generic skew-symmetric matrix $x=(x_{ij})_{ij}=(-x_{ji})_{ij}$
with the following properties:
\begin{enumerate}
\item $f$ vanishes on all skew matrices of rank $r$; and
\item for every row index $i$, $f$ admits an expansion
\[ f=x_{i,j_1} f_1 + \cdots + x_{i,j_{r+1}} f_{r+1} \]
where $j_1<\ldots<j_{r+1}$ are all distinct from $i$ and where each $f_l$
is a polynomial in the entries of the 
$([m] \setminus \{i,j_l\})^2$-submatrix $z$ of $x$ with the
following shape: a scalar times a monomial times the
Pfaffian of a principal $r \times r$-submatrix of $z$. 
\end{enumerate}
Then $m=r+2$ and $f$ itself is a scalar multiple of the Pfaffian
of $x$.
\end{lm}

In the proof of this lemma we use that the ideal of polynomials
vanishing on rank-$r$ skew-symmetric matrices is generated by the
$(r+2)$-Pfaffians of principal submatrices, and that these form a
Gr\"obner basis with respect to the lexicographic order with
\[
x_{1,n}>x_{1,n-1}>\ldots>x_{1,2}>x_{2,n}>\ldots>x_{2,3}>\ldots>x_{n-1,n}. 
\] 
The leading term of the Pfaffian of the principal matrix with row
indices $j_1<\ldots<j_{r+2}$ of $x$ is
$x_{j_1,j_{r+2}} x_{j_2,j_{r+1}} \cdots
x_{j_{(r+2)/2},j_{1+(r+2)/2}}$, a product of $(r+2)/2$ variables in
the upper half of the matrix pointing in the down-left direction. For
these results see \cite{HerzogTrung92}.

\begin{proof}
  First we count variables: $f$ contains precisely $r+1$ variables
  from each row, but every variable appears in two rows, so $f$
  contains $m(r+1)/2$ variables in total. Thus $m$ is even.

On the other hand, consider the expansion along the first row: 
\[
  f=x_{1,j_1} f_1 + \cdots + x_{1,j_{r+1}} f_{r+1}.
\]
Here each $f_l$ is a scalar times a monomial $u_l$ times an
$r$-Pfaffian, denoted~$\Pf_l$. The Pfaffian contains $\binom{r}{2}$
variables, and $u_l$ another $(m-(r+2))/2$ variables, disjoint from
those in $\Pf_l$---here we use that $f$ is linear in the variables in
each row. Furthermore, the variables in $u_l$ appear in all $r!!$
terms of $f_l$, and hence, since all variables in $f$ appear in
precisely that many terms, the variables in $u_l$ are disjoint from
the variables in the $f_{l'}$ with $l' \neq l$.

Hence in total we see $(r+1)(m-(r+2))/2$ distinct variables in
$u_1,\ldots,u_{r+1}$.  Adding to these the $r+1$ variables
$x_{1,j_l}$, there are only $\binom{r+1}{2}$ variables left for the
$r+1$ Pfaffians~$\Pf_l$, $l=1,\ldots,r+1$. This is only possible if
those Pfaffians are the sub-Pfaffians of a principal
$(r+1) \times (r+1)$-submatrix $y$ of the
$([m]\setminus \{1\})^2$-submatrix of~$x$.

Let $J \subseteq [m] \setminus \{1\}$ be the set of indices labelling
the columns (and rows) of $y$. We have $|J|=r+1$, and claim that
$J=\{j_1,\ldots,j_{r+1}\}$.  Suppose not, and then let $\Pf_i$ be the
Pfaffian of a matrix involving a column index
$j \in J \setminus \{j_1,\ldots,j_{r+1}\}$. After applying a
permutation of $[m] \setminus \{1\}$ to rows and columns, we may
assume that $i=r+1$ and that $j=m>j_{r+1}=m-1$. After applying a
further permutation of $\{2,\ldots,j_{r+1}-1\}=\{2,\ldots,m-2\}$, we
may assume that $\Pf_{r+1}$ is the Pfaffian of the principal submatrix
with columns $m-r,m-r+1,\ldots,m-2,m$. The variables in $u_{r+1}$
encode a partition of $\{2,\ldots,m-r-1\}$ into pairs. After applying
a permutation of this set to rows and columns, we may assume that
these pairs are $\{2,3\},\{4,5\},\ldots,\{m-r-2,m-r-1\}$.  See
Figure~\ref{fig:matrix3} for an illustration. Now the leading monomial
of $f$ equals $x_{1,j_{r+1}}=x_{1,m-1}$ times $u_{r+1}$ times the
leading monomial of $\Pf_{r+1}$; the latter is indicated by dark gray
squares in Figure~\ref{fig:matrix3}. But this monomial contains no
$(r+2)/2$ variables arranged in a down-left direction, hence $f$ does
not lie in the Pfaffian ideal, a contradiction, showing that
$J = \{j_1,\ldots,j_{r+1}\}$.

\begin{figure}
\includegraphics{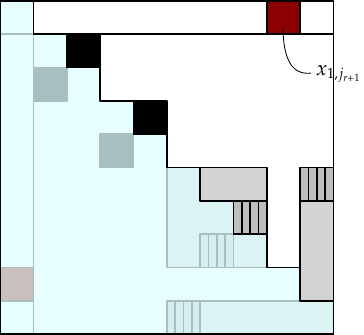}
\caption{The lower half of the matrix is drawn for
visualisation purposes only. The black squares correspond to
variables in $u_{r+1}$ and the gray region indicates the
Pfaffian $\Pf_{r+1}$ in $f_{r+1}$.}
\label{fig:matrix3}
\end{figure}

After all $y$ has rows and columns by $j_1,\ldots,j_{r+1}$.  Applying
a permutation of $[m] \setminus \{1\}$ to rows and columns, we may
assume that $j_1=2,j_2=3,\ldots,j_{r+1}=r+2$.  Then each $f_l$ equals
a scalar times $x_{1,l+1}$ times the Pfaffian $\Pf_l$ of the
$([m] \setminus \{1,l+1\})^2$-submatrix of~$x$, times a monomial
$u_{l}$ whose variables live in the last $m-(r+2)$ rows and columns
of~$x$. Now let $g$ be the unique scalar multiple of $u_{r+1}$ times
the $(r+2)$-Pfaffian in the upper left corner of $x$ such that in
$h:=f-g$ the terms involving $x_{1,r+2}$ cancel.  As in the proof of
Theorem~\ref{thm:skew} above, if $h$ is nonzero, then for a
very general skew-symmetric $([m] \setminus \{1\})^2$-matrix
$A$ of rank $r$, $h(x_1,A)$, where $x_1$ stands for the variables in
the first row of $x$, is a linear polynomial with fewer than $r+1$
terms that vanishes on the row space of $A$. Again, this contradicts
Proposition~\ref{prop:Linear}.

Hence $h=0$ and $f$ equals a scalar multiple of $u_{r+1}$ times a
Pfaffian. But since $f$ contains $r+1$ variables from all of the last
$m-(r+2)$ columns, we find that $m=r+2$ and $f$ is a scalar
multiple of a Pfaffian, as desired.
\end{proof}

\begin{lm} \label{lm:Completion2}
Let $m,m_1,m_2,r \geq 0$ be nonnegative integers with $r$ even, and
suppose that $m=m_1+m_2$. Denote by $X_m$ the variety of skew-symmetric
$m \times m$-matrices of rank at most $r$. Then the projection $X_m
\to X_{m_1} \times X_{m_2}$ that maps a matrix to its diagonal blocks
is surjective.
\end{lm}

\begin{proof}
Write $r=2s$ and $(A_1,A_2) \in X_{m_1} \times X_{m_2}$. Then $A_i=B_i
\cdot C_i-C_i^T \cdot B_i^T$ for certain $B_i \in K^{m_i \times s}, C_i \in K^{s
\times m_i}$. But then 
\[ X_m \ni \begin{bmatrix} B_1 \\ B_2 \end{bmatrix}
\cdot 
\begin{bmatrix} C_1 & C_2 \end{bmatrix}
- \begin{bmatrix} C_1^T \\ C_2^T \end{bmatrix}
\cdot 
\begin{bmatrix} B_1^T & B_2^T \end{bmatrix}
=
\begin{bmatrix} A_1 & * \\ * & A_2 \end{bmatrix}  \]
is a skew-symmetric matrix of rank at most $r$ with the desired diagonal
blocks. 
\end{proof}

\section{Symmetric matrices}
\label{sec:symmetric-matrices}

An $(r+1)$-minor $\det x[I,J]$ of a symmetric matrix of variables can
have various numbers of terms, depending on $|I \cap J|$: if
$I \cap J=\emptyset$, then this determinant has $(r+1)!$ terms, while
for the other extreme, where $I=J$, the number of terms equals the
number of collections of necklaces that can be made with $n$ distinct
beads; much less than $(r+1)!$. These counts assume that
$\cha K \neq 2$ since the coefficients in the determinant for $I=J$
are (plus or minus) powers of~$2$.

We guess that, if $\cha K \neq 2$, then in the ideal generated by all
$(r+1)$-minors, the shortest polynomials are those
of the form $\det(x[I,I])$ with $I \subseteq [n]$ of size $r+1$. But to
prove this, one would like to perform a Laplace expansion like was used
in the proofs of Theorems~\ref{thm:Mats} and~\ref{thm:skew}.  Such a
Laplace expansion, in the symmetric case, naturally involves determinants
of matrices $x[I',J']$ with $I' \neq J'$, and so to prove our guess one
would probably need to work with a stronger induction hypothesis. At
present, we do not know how to approach this challenge.

\bibliographystyle{amsplain}
\bibliography{shortpolynomials}

\bigskip \medskip

\noindent
\footnotesize {\bf Authors' addresses:}

\smallskip

\noindent Jan Draisma, Universität Bern, Switzerland; and
Eindhoven University of Technology, the Netherlands, 
{\tt jan.draisma@math.unibe.ch}

\noindent Thomas Kahle, OvGU Magdeburg, Germany,
{\tt thomas.kahle@ovgu.de}

\noindent Finn Wiersig, University of Oxford, UK,
{\tt finn.wiersig@maths.ox.ac.uk}

\end{document}